\newtheorem{theorem}{Theorem}
\newtheorem{lemma}{Lemma}
\newtheorem{proposition}{Proposition}
\theoremstyle{definition}
\newtheorem{definition}{Definition}
\theoremstyle{remark}
\newtheorem{remark}{Remark}
\newtheorem*{acknowledgements}{{\bf Acknowledgements}}
\newcommand{\bbc}{\mathbb{C}}
\newcommand{\cc}{\mathbb{C}}
\newcommand{\bs}{\backslash}
\newcommand{\re}{\mbox{Re}}
\newcommand{\diag}{\mbox{diag}}
\newcommand{\rr}{\mathbb{R}}
\newcommand{\adele}{\mathbb{A}}
\author{Omer Offen and Eitan Sayag}
\title[On representations distinguished by the symplectic group]
{On Unitary representations of $GL_{2n}$ distinguished by the symplectic group\date{}}
\begin{document}

\begin{abstract}
We provide a family of representations of $GL_{2n}$ over a p-adic field that admit a non-vanishing linear
functional invariant under the symplectic group (i.e. representations that are $Sp_{2n}$-distinguished).
While our result generalizes a result of M. Heumos and S. Rallis our methods, unlike their purely local technique,
relies on the theory of automorphic forms.
The results of this paper together with later works by the authors imply that the family of representations studied
in
this paper contains all irreducible, unitary representations of the general linear group that are distinguished by
the
symplectic group.
\end{abstract}
\maketitle

\section{Introduction}

Let $F$ be a $p$-adic field and let $G=GL_{2n}(F)$. We denote by
$$
    H_x=\{g \in G | {}^t g x g=x\}
$$ the symplectic group associated with the skew symmetric
matrix $x \in G$. We further denote by $H$ the group
$H_{\epsilon_{2n}}$ where
$$
    \epsilon_{2n}=\left(
        \begin{array}{cc}
            & w_n \\
            -w_n &
        \end{array}
    \right)
$$
and $w_n$ is the $n \times n$ permutation matrix with unit
anti-diagonal.
We are interested in smooth representations of the group $G$. For a
comprehensive theory see \cite{BZ}. We abuse notation and say that
such a representation is unitary when it is unitarizable.

\begin{definition}
A smooth representation $\pi$ of $G$ is called $H$-distinguished if
$$
    \mbox{Hom}_H(\pi,\cc)\not=0.
$$
\end{definition}
In this work we show that a certain family of irreducible, unitary
representations of $G$ are distinguished by the symplectic group
$H$. In upcoming works (\cite{os2}, \cite{os3}) we show, in particular, that
this family exhausts the irreducible, unitary, $H$-distinguished
representations of $G$.

Our interest in local symplectic periods is motivated by the work of
Klyachko over finite fields \cite{kly}. In \cite{MR91k:22036},
Heumos and Rallis began the study of an analogue for $p$-adic
fields. A survey of their work, with some motivation and the
relation to periods of automorphic forms, can also be found in
\cite{MR1216185}. Let us briefly describe the problem at hand.

Let $\psi$ be an additive character of $F$ and let $U_q$ denote
the group of upper triangular unipotent matrices in $GL_q(F)$. We
denote by
$$
    \psi_q(u)=\psi(u_{1,2}+\cdots+u_{q-1,q})
$$
the associated character of $U_q$. We will also denote by $H_{2q}$
the symplectic group $H_{\epsilon_{2q}}$. For $0 \le k \le [{q
\over 2}]$, let $M_{q,k}$ be the subgroup of $GL_q(F)$ of matrices
of the form
$$
    \left(
        \begin{array}{cc}
          u & X \\
          0 & h \\
        \end{array}
    \right)
$$
where $u \in U_{q-2k},\,h \in H_{2k}$ and $X \in M_{(q-2k) \times
2k}(F)$. We denote by $\psi_{q,k}$ the character
$$
    \psi_{q,k}
    \left(
        \begin{array}{cc}
          u & X \\
          0 & h \\
        \end{array}
    \right)=
    \psi_{q-2k}(u).
$$
We refer to the spaces
$$
    \mathcal{M}_{q,k}=\mbox{Ind}_{M_{q,k}}^{GL_q(F)}(\psi_{q,k})
$$
as \emph{Klyachko models}. The model $\mathcal{M}_{2n,n}$ is referred to as a symplectic model and the Klyachko
models
interpolate between a
Whittaker model and (if $q$ is even) a symplectic model. An
irreducible representation $\pi$ of $GL_q(F)$ is said to have the
Klyachko model $\mathcal{M}_{q,k}$ if
$$
    \mbox{Hom}_{GL_q(F)}(\pi,\mathcal{M}_{q,k}) \not=0.
$$
Note that a representation is $H$-distinguished if and only if it
has a symplectic model. In \cite{kly}, Klyachko showed that each
irreducible representation of $GL_q$ over a finite field has a
unique Klyachko model. In \cite{MR91k:22036}, Heumos-Rallis provide
evidence that every irreducible, unitary representation of $GL_q(F)$
has a Klyachko model. In fact they prove this fact for $q \le 4$.
They also show that irreducible, unitary representations can imbed
in at most one of the different Klyachko models $\mathcal{M}_{q,k}$.
We refer to \cite[p. 143]{MR1216185} for the local conjecture and
its global analogue. The present work is a step towards proving the
conjectures of [loc. cit.]. In \cite{os2}, we will show that any
irreducible unitary representation has a Klyachko model. Moreover,
we specify the model it has in terms of the Tadic parameter of the
representation. The exact description of a Klyachko model for any
unitary representation, together with the main result of the present
work and the disjointness of models \cite[Theorem
1]{os3}, imply that the representations that we consider
in Theorem \ref{symp} are precisely all irreducible, unitary
representations that are distinguished by $H$.

\begin{acknowledgements}
This work was done while both authors were Postdoctoral Fellows at the Weizmann Institute.
Both authors wish to thank Prof. Stephen Gelbart for his invitation and the Mathematics Department for excellent
working conditions. The first author was supported by the Sir Charles Clore Postdoctoral Fellowship at the Weizmann
Institute and the second author was supported by the Anne Stone Postdoctoral Fellowship at the Weizmann Institute.
\end{acknowledgements}

\section{Main Results}
To state our main theorem we briefly review Tadic's classification
of the unitary dual of $G$ \cite{MR88b:22021}. Denote by $\nu$ the
character $g \mapsto |\det g |$ on $GL_q(F)$ for any $q$. For
representations $\pi_i$ of $GL_{q_i}(F),\,i=1,\dots t$ and for
$q=q_1+\cdots+q_t$ we denote by $\pi_1 \times \cdots \times \pi_t$
the representation of $GL_q(F)$ obtained from $\pi_1 \otimes
\cdots \otimes \pi_t$ by normalized parabolic induction. For a
representation $\tau$ of $GL_q(F)$ and $\alpha \in \rr$ we denote
$\pi(\tau,\alpha)=\nu^\alpha \tau \times \nu^{-\alpha} \tau$. For
representations $\pi_i$ of $GL_{q_i}(F)$ set $\pi=\pi_1 \otimes
\cdots \otimes \pi_t$ and let $\lambda=(\lambda_1,\dots,\lambda_t)
\in \bbc^t$. We denote
$$
    \pi[\lambda]=\nu^{\lambda_1} \pi_1 \otimes \cdots \otimes
    \nu^{\lambda_t}\pi_t
$$
and
$$
    I(\pi,\lambda)=\nu^{\lambda_1} \pi_1 \times
    \cdots \times \nu^{\lambda_t} \pi_t.
$$
Let
$$
    \Lambda_{m}=({m-1
    \over 2},{m-3 \over 2},\dots,{1-m \over 2}) \in \rr^{m}.
$$
A representation of $GL_r(F)$ is called square integrable if its
matrix coefficients are square integrable modulo the center.
Square integrable representations are in particular unitary. For a
square integrable representation $\delta$ of $GL_r(F)$, the
representation $I(\delta^{\otimes m},\Lambda_{m})$ has a unique
irreducible quotient which we denote by $U(\delta,m)$. Let
$$
    B_u=
    \left\{
        U(\delta,m),\pi(U(\delta,m),\alpha): \delta\mbox{-square
        integrable,
        }m \in\mathbb{N}, |\alpha|< {1 \over 2}
    \right\}.
$$
A representation of the form $ \sigma_1 \times \cdots \times
\sigma_t $ where $\sigma_i \in B_u$, is irreducible and unitary.
Any irreducible, unitary representation of $GL_q(F)$ for some $q$
has this form and is uniquely determined by the multi-set of
$\sigma_i$'s up to reordering. This is the classification of
Tadic. Our main result is the following.
\begin{theorem}\label{symp}
Let $\pi=\sigma_1 \times \cdots \times \sigma_t \times \tau_{t+1}
\times \cdots \times \tau_s$ be a unitary representation of $G$,
such that $\sigma_i=U(\delta_i,2m_i) \in B_u$ and
$\tau_i=\pi(U(\delta_i,2m_i),\alpha_i) \in B_u$. Then $\pi$ is
$H$-distinguished.
\end{theorem}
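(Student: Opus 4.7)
The approach is global-to-local. We fix a number field $k$ and a non-archimedean place $v_0$ with $k_{v_0}=F$, and construct an automorphic representation of $GL_{2n}(\mathbb{A}_k)$ whose $v_0$-component is $\pi$ and whose (regularized) global symplectic period is non-vanishing; projecting the period to $v_0$ then supplies the desired element of $\mathrm{Hom}_H(\pi,\mathbb{C})$. The starting point is to globalize each essentially square-integrable $\delta_i$ to a cuspidal automorphic representation $\Delta_i$ of $GL_{r_i}(\mathbb{A}_k)$ with $\Delta_{i,v_0}\cong\delta_i$. By Moeglin-Waldspurger, the iterated residue at $\Lambda_{2m_i}$ of the Eisenstein series on $GL_{2m_ir_i}(\mathbb{A}_k)$ induced from $\Delta_i^{\otimes 2m_i}$ produces a residual Speh representation $\Sigma_i=U(\Delta_i,2m_i)$ whose local component at $v_0$ is $\sigma_i$. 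To incorporate the complementary-series factors $\tau_i=\pi(U(\delta_i,2m_i),\alpha_i)$, we form the Eisenstein series $E(\phi_s,g)$ on $GL_{2n}(\mathbb{A}_k)$ induced from the parabolic whose Levi carries one copy of $\Sigma_i$ for $i\le t$ and two copies of $\Sigma_i$ for $i>t$, with spectral parameter $s=(0,\dots,0,s_{t+1},-s_{t+1},\dots,s_s,-s_s)$. Evaluated at $s_i=\alpha_i$, the $v_0$-component of $E(\phi_s,\cdot)$ generates $\pi$.

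The central object is the regularized global symplectic period
$$
    \mathcal{P}(\phi,s)=\int_{H(k)\backslash H(\mathbb{A}_k)}^{\mathrm{reg}} E(\phi_s,h)\,dh.
$$
Unfolding along the double-coset decomposition $P(k)\backslash GL_{2n}(k)/H(k)$ and using the Jacquet-Rallis principle that on $GL_{2r}$ cuspidal representations carry no non-zero symplectic period while residual Speh representations $U(\Delta,2m)$ of $GL_{2mr}$ do, one isolates an open-orbit contribution expressible, by a Rankin-Selberg style manipulation, as an Euler product of local symplectic zeta integrals multiplied by ratios of global $L$-values attached to the pairs $\Sigma_i\times\Sigma_j^\vee$. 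By arranging the globalizations $\Delta_i$ and the section $\phi_s$ carefully, these global $L$-values are finite and non-zero at $s=(\alpha_{t+1},\dots,\alpha_s)$ and each local zeta integral is non-vanishing, so that $\mathcal{P}(\phi,s)\neq 0$ there. The non-vanishing of the local factor at $v_0$ is precisely the non-triviality of a member of $\mathrm{Hom}_H(\pi,\mathbb{C})$.

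The main obstacle is the combination of orbit analysis and analytic continuation that underlies this unfolding. One must show that $E(\phi_s,\cdot)$ is holomorphic at $s=(\alpha_i)$ throughout the unitary strip $|\alpha_i|<\frac{1}{2}$, control the poles of the standard intertwining operators applied to Speh-type inducing data, verify that non-open double cosets contribute no cancelling terms, and guarantee non-vanishing of both local and global factors — the latter being where the flexibility of globalization is essential and precisely why the argument requires automorphic rather than purely local methods. Once these analytic points are settled, the descent from the global period to the local statement at $v_0$ is formal, by a standard finite-dimensionality argument on the space of $H$-invariant functionals.
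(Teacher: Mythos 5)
Your proposal takes an all-global route that differs substantially from the paper, and in its present form it has genuine gaps that leave the key analytic content unaddressed. You globalize the full representation $\pi$ as the local component of an Eisenstein series on $GL_{2n}(\mathbb{A}_k)$ induced from residual Speh data, and then want to extract the local $H$-functional from a regularized global symplectic period $\mathcal{P}(\phi,s)=\int^{\mathrm{reg}}_{H(k)\backslash H(\mathbb{A}_k)}E(\phi_s,h)\,dh$. But (i) the existence and properties of such a regularized period for a non-square-integrable Eisenstein series on $GL_{2n}$ against $Sp_{2n}$ is itself a deep analytic problem, not a device one can invoke; (ii) the claimed unfolding of the regularized period into an Euler product of local zeta integrals times ratios of $L$-values is asserted, not established -- and it is far from clear it even has this shape, since naive unfolding over $P(k)\backslash GL_{2n}(k)/H(k)$ would place cuspidal inner integrals on the Levi, which vanish by Jacquet--Rallis, so the nonzero contribution has to emerge from taking residues/derivatives, exactly the subtlety that kills the naive picture; (iii) the non-vanishing of the local zeta integrals and the global $L$-value ratios is simply postulated. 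Your closing paragraph lists precisely these points as ``obstacles'' to be settled, so the proposal is a plan for a proof rather than a proof.

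The paper sidesteps all of this by splitting the problem. Globally it treats \emph{only} the building blocks $U(\delta,2m)$: these sit in the residual (discrete) spectrum as Speh representations $L(\sigma,2m)$, so no regularization of the period is needed, and the factorizable period $\mathfrak{j}_H$ and its non-vanishing and factorization through $L(\sigma,2m)$ are imported from the earlier works on distinguished residual spectrum ([Odist], [OII]). The passage from the building blocks to $\pi=\nu^{\alpha_1}U(\delta_1,2m_1)\times\cdots\times\nu^{\alpha_t}U(\delta_t,2m_t)$ (which covers all the $\tau_i$'s as well, since $\pi(U(\delta,2m),\alpha)=\nu^{\alpha}U(\delta,2m)\times\nu^{-\alpha}U(\delta,2m)$) is done \emph{purely locally}: one writes down the open-orbit integral (\ref{formula}), twists by $\delta_Q^s$, proves absolute convergence for $\mathrm{Re}\,s\gg0$ via the Jacquet--Rallis integration formula (Lemma \ref{complicated}), and then invokes uniqueness of symplectic periods together with Bernstein's principle of meromorphic continuation to produce a non-zero period at $s=0$. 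That local hereditary argument is exactly the machinery your global unfolding would have to reproduce implicitly, and it is the part your proposal neither supplies nor replaces. If you want to pursue the all-global route, the minimum missing ingredients are a worked-out regularized period of mixed residual Eisenstein series on $GL_{2n}$ against $Sp_{2n}$ with a genuine orbit-by-orbit analysis, and a non-vanishing statement for the resulting local factor at $v_0$.
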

In fact we prove in Proposition \ref{induction} that $\pi$ is
$H$-distinguished for a wider family of -- not necessarily unitary
-- representations. Theorem \ref{symp} is a generalization of a
result of Heumos-Rallis. They showed in \cite{MR91k:22036} that
the representations $U(\delta,2)$ are $H$-distinguished. Their
argument is the following. First, they construct a non-vanishing
$H$-invariant functional on $I(\delta \otimes \delta,\Lambda_2)$.
This representation has length 2 and its unique irreducible
subrepresentation has a Whittaker model. The existence of an
$H$-invariant functional on $U(\delta,2)$ is therefore a
consequence of the fact that irreducible generic representations
are not $H$-distinguished. This is a special case of \cite[Theorem
3.1]{MR91k:22036}. The method of proof of Heumos-Rallis does not
generalize directly to the case $m>1$. In Remark \ref{hrpf} we
explain where the difficulties lie. Our proof of Theorem
\ref{symp} is in two steps. We first use global methods to show
that the building blocks $U(\delta,2m)$ are $H$-distinguished. We
introduce in \eqref{iform} a non zero $H$-invariant functional
$j_H$ on $I(\delta^{\otimes 2m},\Lambda_{2m})$ and imbed
$I(\delta^{\otimes 2m},\Lambda_{2m})$ as the local component of a
certain global representation $I(\sigma^{\otimes
2m},\Lambda_{2m})$ induced from cuspidal. The functional $j_H$ is
then the corresponding local component of a certain factorizable,
period $\mathfrak{j}_H$ on $I(\sigma^{\otimes 2m},\Lambda_{2m})$
and $U(\delta,2m)$ is the local component of the unique
irreducible quotient $L(\sigma,2m)$ of $I(\sigma^{\otimes
2m},\Lambda_{2m})$. We then use the results of \cite{Odist} to
show that $\mathfrak{j}_H$ is not identically zero and factors
through $L(\sigma,2m)$. The second step consists of showing that
symplectic periods on the building blocks can be induced. Our
proof of this fact is rather technical. The idea, due to
Heumos-Rallis, is to apply Bernstein's principle of meromorphic
continuation. This requires convergence of a certain complicated
integral dependent on a complex parameter in some right
half-plane. We accomplish this in Lemma \ref{complicated} using an
integration formula of Jacquet-Rallis \cite{MR93b:22035}. In fact
we now know that the hereditary property of symplectic periods
follows from a recent work of Delorme-Blanc \cite{DelBla}.

\section{Symplectic period on the building blocks}\label{Hperiod}
Let $\delta$ be a square integrable representation of $GL_r(F)$
and let $n=mr$. We construct an explicit non-zero and
$H$-invariant linear form $l_H$ on the representation
$U(\delta,2m)$ of $G=GL_{2n}(F)$. For a permutation $w \in
\mathfrak{S}_{2m}$ in $2m$ variables, let $M(w)$ be the standard
intertwining operator
$$
    M(w):I(\delta^{\otimes 2m},\Lambda_{2m}) \to I(\delta^{\otimes
    2m},w\Lambda_{2m}).
$$
Let $w'=w'_{2m}$ be the permutation defined by
$$
        w'(2i-1)=i,\,w'(2i)=2m+1-i,\,i=1,\dots,m.
$$
Let $M$ be the standard Levi of type $(r,\dots,r)$ of $G$ and let
$M_H=M \cap H$. Up to a scalar, there is a unique $M_H$-invariant
form on $\delta^{\otimes 2m}$ which we denote by $l_{M_H}$.
Indeed, $l_{M_H}$ is given by the pairing of $\delta^{\otimes m}$
with its contragradiant. Let $K$ be the standard maximal compact subgroup of $G$ and set $K_H=K \cap H$.
The linear form
\begin{equation}\label{iform}
    j_H(\varphi)= \int_{K_H} l_{M_H} (M(w')\varphi (k))dk
\end{equation}
is a non-zero $H$-invariant form on $I(\delta^{\otimes 2m}
,\Lambda_{2m})$. Indeed, this is shown in \cite[\S 3]{Odist}, when
$\delta$ is any irreducible, generic, unitary representation. To
obtain a symplectic period on $U(\delta,2m)$ it is therefore
enough to show that the form $j_H$ factors through the unique
irreducible quotient $U(\delta,2m)$ of $I(\delta^{\otimes
2m},\Lambda_{2m})$.
\begin{remark}\label{hrpf}
If $\delta$ is supercuspidal, we can show that the representation
$I(\delta^{\otimes 2m},\Lambda_{2m})$ has a decomposition series
for which no factor (except $U(\delta,2m)$) is $H$-distinguished.
When $m=1$, the same is true for any square integrable $\delta$.
This was the key point in the proof of \cite[Theorem
11.1]{MR91k:22036}. For $m>1$ and $\delta$ square integrable this
is in general no longer true. The representation
$I(\delta^{\otimes 2m},\Lambda_{2m})$ may have more then one
decomposition factor which is $H$-distinguished. For this reason
the method of proof of Heumos-Rallis does not generalize directly
to $U(\delta,2m)$. To overcome this problem we use a global
approach.
\end{remark}

We imbed our local problem in a global setting. In order to
construct, locally, a non-vanishing symplectic period, we
construct a global, decomposable symplectic period and apply
\cite{Odist} to show that it factors through the unique
irreducible quotient.

\begin{proposition}\label{buildblock}
The form $j_H$ on $I(\delta^{\otimes 2m},\Lambda_{2m})$ factors
through $U(\delta,2m)$, i.e. it defines a non-zero $H$-invariant
form on $U(\delta,2m)$.
\end{proposition}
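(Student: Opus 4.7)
The plan is to embed the local question into a global setting, exactly as sketched in the introduction. I would begin by choosing a number field $k$ admitting a place $v_0$ with $k_{v_0}\cong F$, and globalizing the square integrable representation $\delta$ to a cuspidal automorphic representation $\sigma$ of $GL_r(\adele)$ with $\sigma_{v_0}\cong \delta$. Such a globalization is standard for supercuspidal $\delta$ via Poincar\'e series/simple trace formula arguments, and the general square-integrable case follows by first globalizing the supercuspidal support and then passing to the associated Zelevinsky segment. With $\sigma$ in hand, the global induced representation $I(\sigma^{\otimes 2m},\Lambda_{2m})$ has our local representation $I(\delta^{\otimes 2m},\Lambda_{2m})$ as its $v_0$-component, and its unique irreducible (residual) quotient $L(\sigma,2m)$ has local component $U(\delta,2m)$ at $v_0$.

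Next, I would form the global symplectic period $\mathfrak{j}_H$ as the (regularized, or residual) period integral of the Eisenstein series attached to $I(\sigma^{\otimes 2m},\Lambda_{2m})$ against $H(k)\backslash H(\adele)$. A standard unfolding of the Eisenstein series — essentially the one carried out in \cite[\S 3]{Odist} — shows that $\mathfrak{j}_H$ is Eulerian and decomposes as a product of local functionals of the shape \eqref{iform}, with the distinguished permutation $w'_{2m}$ appearing naturally as the unique double-coset representative after unfolding by the Bruhat decomposition of $H\bs G/P$. At almost all places the local factor is the unramified symplectic period, which is readily seen to be non-zero on the spherical vector, so the Euler product is non-trivial as a meromorphic function of the inducing parameter. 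At the place $v_0$ one recovers exactly the local functional $j_H$ of \eqref{iform}.

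The final step is to cite \cite{Odist}, which shows that the global period $\mathfrak{j}_H$ is non-vanishing and factors through the residual representation $L(\sigma,2m)$. Combined with the Euler product decomposition of $\mathfrak{j}_H$ and the freedom to vary test sections at places $v\neq v_0$ to make the product of the other local factors non-zero, this forces $j_H$ at $v_0$ to vanish on the kernel of $I(\delta^{\otimes 2m},\Lambda_{2m})\twoheadrightarrow U(\delta,2m)$ and to remain non-zero on the quotient, which is the claim.

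The main obstacle is not in the local manipulations but in correctly importing \cite{Odist}: one must verify that the global period produced there matches the decomposable $\mathfrak{j}_H$ constructed above, and that the appropriate multi-residue is taken in a manner that realizes the period on $L(\sigma,2m)$ rather than on a competing constituent. The globalization of $\delta$ is routine provided one does not need extra ramification constraints; the Euler factorization is standard once convergence (after residue) is in place; and the non-vanishing/factorization through $L(\sigma,2m)$ is precisely the output of \cite{Odist}. Thus the entire proof is a matter of assembling the global setup so that \cite{Odist} applies cleanly at the point $\Lambda_{2m}$.
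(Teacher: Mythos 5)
Your proposal follows essentially the same route as the paper: globalize $\delta$ to a cuspidal automorphic $\sigma$ with $\sigma_v\cong\delta$, realize $j_H$ as the $v$-component of the decomposable global period $\mathfrak{j}_H$ on $I(\sigma^{\otimes 2m},\Lambda_{2m})$, and use non-vanishing of $\mathfrak{j}_H$ together with the fact that it factors through the residual quotient $L(\sigma,2m)$ to force the local factor $j_{H,v}$ to vanish on the kernel of $I(\delta^{\otimes 2m},\Lambda_{2m})\to U(\delta,2m)$ while staying non-zero. The only small slip is attribution: the paper draws the non-vanishing and Euler decomposition of $\mathfrak{j}_H$ from \cite{Odist}, but the statement that $\mathfrak{j}_H$ factors through $L(\sigma,2m)$ is taken from \cite{OII}, not \cite{Odist}.
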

\begin{proof}
We start with the following lemma.
\begin{lemma}\label{global}
Let $\delta$ be a square integrable representation of $GL_r(F)$.
There is a number field $k$, a place $v$ of $k$ so that $F=k_v$
and a cuspidal automorphic representation $\sigma $ of
$GL_r(\adele_k)$ so that $\delta=\sigma_v$.
\end{lemma}
\begin{proof}[Proof of the lemma]
The lemma follows from the proof of Proposition 5.15 in
\cite{MR84j:12018}.
\end{proof}
Let $k,\,v$ and $\sigma$ be as in Lemma \ref{global}. Let $P$ be
the standard parabolic subgroup of $G$ with Levi $M$. For
$\lambda=(\lambda_1,\dots,\lambda_{2m}) \in \cc^{2m}$ denote
$$
    I(\sigma^{\otimes 2m},\lambda)=\mbox{Ind}_{P(\adele_k)}^{G(\adele_k)}(|\det |_{\adele_k}^{\lambda_1}\sigma
    \otimes
    |\det |_{\adele_k}^{\lambda_{2}}\sigma \otimes \cdots \otimes
    |\det |_{\adele_k}^{\lambda_{2m}}\sigma).
$$
Let $L(\sigma,2m)$ be the unique irreducible quotient of
$I(\sigma^{\otimes 2m},\Lambda_{2m})$. Then $I(\delta^{\otimes
2m},\Lambda_{2m})$ is the local component of $I(\sigma^{\otimes
2m},\Lambda_{2m})$ and $U(\delta,2m)$ is the local component of
$L(\sigma,2m)$ at $v$. Let
$$
        \mathfrak{j}_H(\varphi)=\int_{K_H} \int_{M_H \backslash M_H(\adele_k)^1}
        (M_{-1}(w')\varphi)
        (mk)\,dm\,dk
$$
where $M_{-1}(w')$ is the multi-residue at $\Lambda_{2m}$ of the
standard intertwining operator
$$
    M(w',\lambda):I(\sigma^{\otimes 2m},\lambda) \to
    I(\sigma^{\otimes 2m},w'\lambda).
$$
It is shown in \cite{Odist}, that the form $\mathfrak{j}_H$ is a
non-zero $H(\adele_{k,f})$-invariant form on $I(\sigma^{\otimes
2m},\Lambda_{2m})$, where $\adele_{k,f}$ is the ring of finite
ad\`eles of $k$. It is decomposable into local factors
$\mathfrak{j}_H=\otimes_w j_{H,w}$ and $j_{H,v}$ is proportional
to $j_H$ given by \eqref{iform}. Let $E_{-1}$ denote the
intertwining operator that projects $I(\sigma^{\otimes
2m},\Lambda_{2m}) \to L(\sigma,2m)$. It is also decomposable. In
\cite{OII} it is shown that $\mathfrak{j}_H$ factors through
$L(\sigma,2m)$, i.e. there is a linear form $\mathfrak{l}_H$ on
$L(\sigma,2m)$ that makes the following diagram commute
\begin{equation}\label{diagram}
    \begin{array}{ccc}
      I(\sigma^{\otimes 2m},\Lambda_{2m}) & \mathop{\to}\limits^{E_{-1}}  & L(\sigma,2m) \\
      \mathfrak{j}_H\searrow &  & \swarrow \mathfrak{l}_H \\
       & \cc &  \\
    \end{array}.
\end{equation}
Fix a decomposable element $\varphi_0=\otimes_w \varphi_{0,w} \in
I(\sigma^{\otimes 2m},\Lambda_{2m})$ such that
$\mathfrak{j}_H(\varphi_0)\not=0$. For each $\varphi_v \in
I(\delta^{\otimes 2m},\Lambda_{2m})$ denote $\varphi=\otimes_{w
\not=v }\varphi_{0,w} \otimes \varphi_v$. If $\varphi_v$ is in the
kernel of the projection $I(\delta^{\otimes 2m},\Lambda_{2m}) \to
U(\delta,2m)$ then $\varphi$ is in the kernel of $E_{-1}$ and
therefore $\mathfrak{j}_H(\varphi)=0=\mathfrak{j}_{H,v}(\varphi_v)
\prod_{w \not=v}\mathfrak{j}_{H,w}(\varphi_{0,w})$. Thus
$\mathfrak{j}_{H,v}(\varphi_v)=0$. This shows that
$\mathfrak{j}_{H,v}$ factors through $U(\delta,2m)$. The
proposition follows.
\end{proof}
We only needed to introduce global notation for the proof of
Proposition \ref{buildblock}. For the remainder of this work we
remain strictly in a local setting. Recall that $j_H$ is the
$H$-invariant form on $I(\delta^{\otimes 2m},\Lambda_{2m})$
defined by (\ref{iform}). It follows from Proposition
\ref{buildblock} that there exists an $H$-invariant form $l_H$ on
$U(\delta,2m)$ such that $j_H=l_H \circ M(w_{2n})$.
\section{Induction of the symplectic period}
In this section we fix irreducible, square integrable
representations $\delta_i$ of $GL_{r_i}(F),\,i=1,\dots,t$. We also
fix $\alpha_1,\dots,\alpha_t \in \rr$ and positive integers
$m_1,\dots,m_t$.
\begin{proposition}\label{induction}
The representation
\begin{equation}\label{even}
    J=\nu^{\alpha_1} U(\delta_1,2m_1) \times \cdots \times \nu^{\alpha_t} U(\delta_t,2m_t)
\end{equation}
is distinguished by $H$.
\end{proposition}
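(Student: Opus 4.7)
The plan is to construct an $H$-invariant form on $J$ by inducing the symplectic periods on the building blocks $U(\delta_i, 2m_i)$ supplied by Proposition \ref{buildblock}, via Bernstein's principle of meromorphic continuation. Set $n_i = m_i r_i$, $n = \sum_i n_i$, and let $P = MN$ be the standard parabolic of $G = GL_{2n}(F)$ whose Levi $M$ has blocks of sizes $2n_1, \dots, 2n_t$. After replacing $\epsilon_{2n}$ by a conjugate skew form compatible with this block decomposition, one has $M \cap H \cong H_1 \times \cdots \times H_t$, where $H_i$ is the symplectic group in the $i$-th block $GL_{2n_i}(F)$. Since each $\nu^{\alpha_i}$ restricts trivially to $H_i$, tensoring the forms $l_{H,i}$ of Proposition \ref{buildblock} produces a non-zero $(M \cap H)$-invariant form $l$ on the inducing representation $\sigma = \nu^{\alpha_1} U(\delta_1, 2m_1) \otimes \cdots \otimes \nu^{\alpha_t} U(\delta_t, 2m_t)$.

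To promote $l$ to an $H$-invariant form on $J = \mathrm{Ind}_P^G(\sigma)$, I would deform holomorphically. For $\underline{s} = (s_1, \dots, s_t) \in \cc^t$, let $I(\underline{s})$ denote the representation of $G$ obtained by twisting the $i$-th inducing factor of $\sigma$ by $\nu^{s_i}$, and for a standard flat section $\varphi_{\underline{s}}$ consider the formal integral
\begin{equation*}
L(\underline{s}, \varphi_{\underline{s}}) = \int_{(P \cap H) \bs H} l\bigl(\varphi_{\underline{s}}(h)\bigr)\, dh.
\end{equation*}
Whenever convergent, $L(\underline{s}, \cdot)$ is manifestly an $H$-invariant form on $I(\underline{s})$, and the goal is to give meaning to its value at $\underline{s} = 0$.

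The central obstacle is proving absolute convergence of $L(\underline{s}, \cdot)$ in some right half-plane, which is the content of Lemma \ref{complicated}. Following Heumos-Rallis, I would apply the integration formula of Jacquet-Rallis \cite{MR93b:22035} to decompose the $H$-integration along the finitely many $P \bs G / H$ double cosets, reducing each contribution to an integral of matrix coefficients of the unitary representations $U(\delta_i, 2m_i)$ twisted by the characters $\nu^{\alpha_i + s_i}$. Unitarity of the building blocks supplies baseline asymptotic decay, while the factors $\nu^{s_i}$ furnish additional exponential decay for $\re(s_i)$ sufficiently large to dominate the Iwasawa Jacobian contribution. Controlling these estimates uniformly across all double cosets is the delicate technical step.

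Once convergence is in hand, Bernstein's principle extends $L(\underline{s}, \cdot)$ to a meromorphic family of $H$-invariant forms on $I(\underline{s})$, and the leading term of its Laurent expansion at $\underline{s} = 0$ yields an $H$-invariant form on $J$. Non-triviality of this leading term is the remaining step; it can be verified by specializing to a flat section supported near a representative of an open $P$-$H$ orbit, where the integrand reduces transparently to the product of the non-vanishing periods $l_{H,i}$, so that non-vanishing is immediate.
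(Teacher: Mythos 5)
Your proposal follows essentially the same route as the paper: induce the building-block periods of Proposition \ref{buildblock} across the open $P$-$H$ orbit, establish convergence in a half-plane via the Jacquet--Rallis integration formula, and then invoke Bernstein's principle of meromorphic continuation (together with uniqueness of symplectic periods, which you leave implicit) to extract a nonzero $H$-invariant form at the central point. The only cosmetic difference is that you deform with a $t$-parameter twist $\nu^{s_1}\otimes\cdots\otimes\nu^{s_t}$, whereas the paper restricts to the one-parameter family $\delta_Q^{s}$; also note that when the continued form has an actual pole at $s=0$, the leading Laurent coefficient vanishes on open-orbit sections, so its non-triviality comes from rationality plus non-identical-vanishing of $L(s,\cdot)$ rather than from evaluation on such sections directly.
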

The rest of this work is devoted to the proof of Proposition
\ref{induction}. Let $k_i=m_i r_i$ and let ${\bf
k}=(2k_1,\dots,2k_t)$ be a partition of $2n$. Let $Q=LV$ be the
standard parabolic subgroup of $G$ of type ${\bf k}$, and let $x$
be the skew symmetric matrix
$$
    x=\diag(\epsilon_{2k_1},\dots,\epsilon_{2k_t}).
$$
We denote $Q_x=Q \cap H_x$ and let $P=MU$ be the standard
parabolic of $G$ of type
$$
    (\mathop{\overbrace{r_1,\dots,r_1}}\limits^{2m_1},\dots,\mathop{\overbrace{r_t,\dots,r_t}}\limits^{2m_t}).
$$
Its Levi component is $M=M_1 \times \cdots \times M_t$ where $M_i$
is the standard Levi of $GL_{2k_i}$ of type $(
r_i,\dots,r_i
)$. We denote by $M_{i,H}$ the intersection of $M_i$ with the
symplectic group $H_{{2k_i}}$ and by $K_{i,H}$ the intersection of
$H_{{2k_i}}$ with the standard maximal compact subgroup of
$GL_{2k_i}(F)$.  In \cite{Odist} we provided an $H$-filtration of
induced representations and a useful description of their
composition factors, using the geometric lemma of
Bernstein-Zelevinsky. The filtration of $J$ is parameterized by $Q
\bs G/H$. Let $l_i$ be the symplectic period on $U(\delta_i,2m_i)$
introduced in \S\ref{Hperiod}. It gives rise to a period on the
first composition factor coming from the open double coset. Let
$\eta \in G$ be such that $x=\eta \epsilon_{2n} {}^t\eta$. Then
$\eta H \eta^{-1}=H_x$ and $Q\eta  H$ is the open double coset. It
is a consequence of Frobenious reciprocity that on the subspace of
$J$, of functions supported on $Q\eta H$ we obtain a non-zero
$H$-invariant functional defined by the formula
$$
    l_H(\varphi)=\int_{(H \cap \eta^{-1}Q \eta) \bs H} (l_1 \otimes \cdots \otimes
    l_t)(\varphi(\eta h))\,dh=
$$
\begin{equation}\label{formula}
    \int_{Q_x \bs H_x} (l_1 \otimes \cdots \otimes
    l_t)(\varphi(h\eta))\,dh.
\end{equation}
However, this integral need not converge on the fully induced
space $J$. We follow the ideas of \cite{MR91k:22036} to bypass
this obstacle. We let
$$
    J_s=\mbox{Ind}_Q^G(\delta_Q^s \otimes(\nu^{\alpha_1} U(\delta_1,2m_1) \otimes \cdots \otimes
    \nu^{\alpha_t}
    U(\delta_t,2m_t)))
$$
where $\delta_Q$ is the modulus function of $Q$. Denote by
$l_{s,H}$ the linear form on $J_s$ defined by the right hand side
of (\ref{formula}). We show that for $\re s$ large enough and for
$ \varphi \in J_s$, the integral defining $l_{s,H}(\varphi)$ is
absolutely convergent. It will then follow from the uniqueness of
 symplectic periods \cite[Theorem 2.4.2]{MR91k:22036}, and from
Bernstein's principle of meromorphic continuation as used in
\cite[p. 277-278]{MR91k:22036}, that $J_s$ has a non-zero
symplecic period, which is a rational function of $q^s$, where $q$
is the cardinality of the residual field of $F$. This will provide
a non-zero symplectic period on $J=J_0$. Indeed, there will be an
integer $m$ so that $s^m l_{s,H}$ is holomorphic and non-zero at
$s=0$. We therefore only need to show that for $\re s
>>0$ and for $\varphi \in J_s$, the integral on the right hand side of (\ref{formula}) is
absolutely convergent. Let
$$
    I'_s=\mbox{Ind}_P^G(\delta_Q^s|_P
    \otimes (\nu^{\alpha_1}\delta_1^{\otimes 2m_1}[\Lambda_{2m_1}]
    \otimes \cdots \otimes \nu^{\alpha_t}\delta_t^{\otimes
    2m_t}[\Lambda_{2m_t}])).
$$
Let $E_i$ denote the projection from $I(\delta_i^{\otimes
2m_i},\Lambda_{2m_i})$ to $U(\delta_i,\Lambda_{2m_i})$. The
projection $E=E_1 \otimes \cdots \otimes E_t$ gives rise to a
projection $\tilde E_s:I'_s \to J_s$ given by
$$
    (\tilde E_s(f))(g)=E(f(g)).
$$
It is easy  to see that if $\varphi=\tilde E_s(f),\,f \in I'_s$
then
\begin{equation}\label{j'formula}
        l_{s,H}(\varphi)=\int_{Q_x \bs H_x} (j'_1 \otimes \cdots \otimes
    j'_t)(f(h\eta))\,dh
\end{equation}
where $j'_i=l_i \circ E_i$ is the non-zero symplectic period on
$I(\delta_i^{\otimes 2m_i},\Lambda_{2m_i})$ introduced in
(\ref{iform}).
We let $j'_{s,H}$ be the linear form on $I'_s$ defined by the
right hand side of (\ref{j'formula}). Let
$$
    I_s=\mbox{Ind}_P^G(\delta_Q^s|_P
    \otimes (\nu^{\alpha_1}\delta_1^{\otimes 2m_1}[w'_{2m_1}\Lambda_{2m_1}] \otimes \cdots \otimes
    \nu^{\alpha_t}\delta_t^{\otimes
    2m_t}[w'_{2m_t}\Lambda_{2m_t}]))
$$
and let $w'=\diag(w'_{2m_1},\dots,w'_{2m_t})$. Then $M(w')$ is the
standard intertwining operator from $I'_s$ to $I_s$. Making the
$j'_i$'s explicit we observe that
$$
    j'_{s,H} =j_{s,H}  \circ M(w'),
$$
where $j_{s,H}$ is the linear form on $I_s$ given by
\begin{equation}\label{jformula}
        j_{s,H}(\varphi)=
\end{equation}
$$
    \int_{Q_x \bs H_x} \int_{K_{1,H} \times
    \cdots \times K_{t,H}} (l_{M_{1,H}} \otimes \cdots \otimes l_{M_{t,H}})(f(\diag(k_1,\dots,k_t)h \eta))
    d(k_1,\dots,k_t) \,dh
$$
and $l_{M_{i,H}}$ is the $M_{i,H}$-invariant form on
$\delta_i^{\otimes 2m_i}$. It is left to prove the following.
\begin{lemma}\label{complicated}
For $\re s >>0$ and $f \in I_s$, the integral (\ref{jformula}) is
absolutely convergent.
\end{lemma}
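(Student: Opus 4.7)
Since the integration domain $K_{1,H} \times \cdots \times K_{t,H}$ is compact, it poses no convergence problem, and it suffices to control the outer integral over $Q_x \backslash H_x$. Fix $f \in I_s$ and let $\Phi(h)$ denote the absolute value of the inner integrand. The plan is to follow the method of Heumos-Rallis, using the Jacquet-Rallis integration formula \cite{MR93b:22035} to replace the abstract quotient integral by a concrete one over explicit unipotent and toric coordinates, and then to estimate the integrand via the square-integrability of the $\delta_i$.

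First I would apply the Jacquet-Rallis formula to decompose the invariant measure on $Q_x \backslash H_x$ as an iterated integral over an explicit unipotent subgroup and a torus of $H_x$, weighted by a modulus character coming from the decomposition. Next, using the Iwasawa decomposition $G = PK$, I would write $h\eta = p(h)\,k(h)$ with $p(h) \in P$ and $k(h) \in K$, and invoke the transformation rule
$$
f(h\eta) = \delta_P^{1/2}(p(h))\,\delta_Q^s(p(h))\,\tau(p(h))\,f(k(h)),
$$
where $\tau$ denotes the inducing representation of $P$. Since $K$ is compact and $f$ is smooth, the vectors $f(k(h))$ range over a bounded set, and applying $l_{M_{i,H}}$ block by block yields matrix coefficients of $\delta_i^{\otimes 2m_i}$ evaluated at the Levi projection of $p(h)$ onto $M_i$. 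By the square-integrability of $\delta_i$, each such matrix coefficient is dominated by a constant multiple of the Harish-Chandra $\Xi$-function of $M_i$, with a harmless fixed character twist by $\nu^{\alpha_i}$.

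Combining these inputs, the convergence of \eqref{jformula} reduces to the convergence of an explicit integral on the unipotent-torus coordinates supplied by the Jacquet-Rallis decomposition, whose integrand is the product of $\delta_P^{1/2}\delta_Q^s$ (evaluated along the Iwasawa decomposition of $h\eta$), the Jacquet-Rallis modulus, and a bounded multiple of $\Xi$-functions of the $M_i$. The main obstacle is the precise bookkeeping of characters: one must verify that after all cancellations the surplus factor $\delta_Q^s$, for $\re s$ sufficiently large, dominates the residual polynomial-type growth coming from $\Xi$ and from the Jacquet-Rallis Jacobian, yielding an integrable integrand on a right half-plane. This calculation parallels, but substantially extends, the single-block estimate of \cite[pp. 277-278]{MR91k:22036} to the partition ${\bf k}=(2k_1,\dots,2k_t)$.
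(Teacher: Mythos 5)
Your proposal follows the same overall route as the paper: convert the quotient integral over $Q_x\backslash H_x$ to an explicit integral via the Jacquet--Rallis formula (or its extension in \cite{OII}), use the Iwasawa decomposition $G=PK$ to split $f$ into a character part and a remainder lying in a compact set of values, bound the latter by matrix-coefficient considerations, and then balance the resulting character factors against $\delta_Q^s$. Two points of departure, one cosmetic and one substantive.

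The cosmetic one: you invoke square-integrability and the Harish-Chandra $\Xi$-function to bound the contribution of $l_{M_{i,H}}$. The paper does something simpler and cleaner. Because $l_{M_{i,H}}$ is the canonical pairing of $\delta_i^{\otimes m_i}$ with its contragredient, the expression $l_{M_{i,H}}\bigl(\delta_i^{\otimes 2m_i}(p)v\bigr)$ is a literal matrix coefficient of the \emph{unitary} representation $\delta_i^{\otimes m_i}$ (half the rank), evaluated at a point built from the Levi blocks of $p$ and their duals $\tilde g = w_q\,{}^tg^{-1}w_q$. Matrix coefficients of unitary representations are bounded by $\|v\|\|w\|$; no temperedness or $\Xi$-estimate is needed, and introducing $\Xi$ would only add an extra polynomial to absorb. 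Also note that the expression is not a matrix coefficient of $\delta_i^{\otimes 2m_i}$, as you write, but of the smaller $\delta_i^{\otimes m_i}$ --- this is precisely the content of the observation about the pairing.

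The substantive gap: the whole difficulty of the lemma lies in the step you defer to ``one must verify that after all cancellations $\delta_Q^s$ dominates the residual polynomial-type growth.'' In the paper this is Claims 1 and 2, which are the heart of the argument. One rewrites the character part of $f(g)$ in terms of the minor norms $\|\epsilon_{R_{i,j}}g\|$, proves
$$
1 \le \|\epsilon_{R_{i,j}}\diag(k_1,\dots,k_t)\sigma_{\bf k}(X)\|
\le \Bigl(\prod_{i=1}^{t-1}\lambda(X_i)\Bigr)^N
$$
by explicitly locating a minor of absolute value one in the relevant lower rows, and proves
$$
\delta_Q^s(p(\sigma_{\bf k}(X))) \le \Bigl(\prod_{i=1}^{t-1}\lambda(X_i)\Bigr)^{-s}
$$
using the lower bound $\|\epsilon_{R_{1,j}}p(\sigma_{\bf k}(X))\|\ge\lambda(X_{t-j})$. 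Only after these two claims does the elementary fact that $\int_{F^q}\lambda(Y)^{-s}\,dY$ converges for $\re s \gg 0$ finish the proof. As written, your proposal stops exactly where the real work begins; stating that the bookkeeping ``must be verified'' is not a proof, and there is no a priori reason, absent those estimates, that $\delta_Q^s$ wins.
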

\begin{proof}
It will be convenient to use the integration formula of
Jacquet-Rallis \cite{MR93b:22035} or rather its generalization to
$Q_x \bs H_x$ given in \cite{OII}. We will need to introduce some
new notation. We will try to minimize the notation and details and
focus only on the information we need for our proof of
convergence. More details regarding the integration formula can be
found in (\cite{OII}, \S5). For $Y=(y_1,\dots,y_m) \in F^m$ let
$$
    \|Y\|=\max_{i=1}^m(|y_i|)
$$
and let
$$
    \lambda(Y)=\max(\|Y\|,1).
$$
For $X=(X_1,\dots,X_{t-1})$ where $X_i \in
M_{2(k_{i+1}+\cdots+k_t) \times k_i}(F)$, we define a unipotent
matrix $\sigma_{\bf k}(X) \in G$ by recursion on $t$ as follows.
Let ${\bf k'}$ be the partition of $2n$ defined by ${\bf
k'}=(2k_1,\dots,2k_{t-2},2k_{t-1}+2k_t)$. Define
$$
    \sigma_{\bf{k}}(X)=
    \left(
        \begin{array}{cccc}
          1_{2(k_1+\cdots+k_{t-2})} &  &  &  \\
           & 1_{k_{t-1}} &  &  \\
           &  & 1_{k_{t-1}} &  \\
           &  & X_{t-1} & 1_{k_{t}}
        \end{array}
    \right)
    \sigma_{\bf k'}(X_1,\dots,X_{t-2}).
$$
For our purpose, it is enough to give the integration formula for
the $H_x$-invariant measure on $Q_x \bs H_x$, for functions $\phi$
on $G$ which are left $U$-invariant. There is a function
$\gamma(X)$ such that for functions $\phi$ as above we have
$$
    \int_{Q_x \bs H_x}\phi(h)dh=\int_{K_{H_x}} \int \gamma(X)
    \phi(\sigma_{\bf k}(X) k) dX dk
$$
where $K_{H_x}=K \cap H_x$. On the factor $\gamma(X)$ all we need
to know is that there are constant $c$ and $m$ such that
$$
    \gamma(X) \le c
    \left(
        \prod\limits_{i=1}^{t-1} \lambda(X_i)
    \right)^m.
$$
For $f \in I_s$ we therefore have
$$
    j_{s,H}(I_s(\eta^{-1})f)=\int_{K_{H_x}} \int \gamma(X) \int_{K_{1,H} \times
    \cdots \times K_{t,H}}
$$
$$
    (l_{M_{1,H}} \otimes \cdots \otimes l_{M_{t,H}})(f(\diag(k_1,\dots,k_t)\sigma_{\bf k}(X) k))
    d(k_1,\dots,k_t) dX dk.
$$
Since $f$ is $K$-finite, fixing a basis $\{f_j\}$ of $I_s(K) f$,
there are smooth functions $a_j$ on $K$ such that
$I_s(k)(f)=\sum_j a_j(k) f_j$. It follows that
$j_{s,H}(I_s(\eta^{-1})f)$ is the finite sum over $j$ of
$\int_{K_{H_x}}a_j(k) dk$ times
$$
    \int \gamma(X) \int_{K_{1,H} \times
    \cdots \times K_{t,H}} (l_{M_{1,H}} \otimes \cdots \otimes l_{M_{t,H}})(f_j(\diag(k_1,\dots,k_t)\sigma_{\bf
    k}(X)))
    d(k_1,\dots,k_t) dX.
$$
To prove the proposition it is therefore enough to show that the
integral
\begin{multline}\label{convstep}
    \int
    \left(
        \prod\limits_{i=1}^{t-1} \lambda(X_i)
    \right)^m \int_{K_{1,H} \times
    \cdots \times K_{t,H}}\\
    \left|
    (l_{M_{1,H}} \otimes \cdots \otimes l_{M_{t,H}})(f(\diag(k_1,\dots,k_t)\sigma_{\bf k}(X)))
    \right|
    d(k_1,\dots,k_t) dX
\end{multline}
is convergent. For any matrix $g$ we will denote by $\|\epsilon_i
g\|$ the maximum of the absolute values of the $i \times i$ minors
in the lower $i$ rows of $g$. For each $j \in [1,t],\,i \in
[1,2m_j]$, let $R_{i,j}=ir_j+2\sum_{q=j+1}^{t}k_q$. We write the
coordinates of each $\Lambda_{2m_j}$ as
$\Lambda_{2m_j}=(\mu_1^j,\dots,\mu_{2m_j}^j)$ (in fact the
convergence is proved for $\mu_i^j$ arbitrary). Let
$\mu=(\Lambda_{2m_1},\dots,\Lambda_{2m_t}) \in
\rr^{2(m_1+\cdots+m_t)}$. For $p \in P$ with diagonal blocks
$p_i^j \in GL_{r_j}(F),\,j \in [1,t],\,i \in [1,2m_j]$ we denote
$p^\mu=\prod_{i,j}|\det p_i^j|^{\mu_i^j}$. If we write an Iwasawa
decomposition of $g \in G$ with respect to $P$ as
$g=p(g)\kappa(g)$ then we have
$$
    f(g)=\delta_Q^s(p(g)) p(g)^{\mu+\rho_P}
    (\mathop{\otimes}\limits_{j=1}^t \delta_j^{\otimes
    2m_i})(p(g))f(\kappa(g))
$$
where $\rho_P$ is half the sum of positive roots with respect to
the parabolic $P$ of $G$. If $g=pk$ where $p \in P$ has diagonal
blocks denoted as before we may write
$$
    |\det p_i^j|={\|
    \epsilon_{R_{2m_j+1-i,j}}g\| \over \| \epsilon_{R}g\|}
$$
where $R=R_{2m_j-i,j}$ if $i<2m_j$ and $R=R_{1,j+1}$ otherwise. In
other words we may find $\lambda \in \rr^{2(m_1+\cdots+m_t)}$
dependent only on $\mu$ so that
$$
    f(g)=\delta_Q^s(p(g)) \prod_{i,j}\|
    \epsilon_{R_{i,j}}g\|^{\lambda_i^j}
    (\mathop{\otimes}\limits_{j=1}^t \delta_j^{\otimes
    2m_i})(p(g))f(\kappa(g)).
$$
The integral (\ref{convstep}) then becomes
\begin{multline}\label{forconv}
    \int
        \left(
            \prod_{i=1}^{t-1} \lambda(X_i)
        \right)^m
    \int_{K_{1,H} \times
    \cdots \times K_{t,H}}
    \delta_Q^s(p(\sigma_{\bf k}(X)))
    \prod_{i,j}\|
    \epsilon_{R_{i,j}}\diag(k_1,\dots,k_t)\sigma_{\bf
    k}(X)\|^{\lambda_i^j} \\
    \left|
    (l_{M_{1,H}} \otimes \cdots \otimes l_{M_{t,H}})((\otimes_{j=1}^t \delta_j^{\otimes
    2m_i})(p(\diag(k_1,\dots,k_t)\sigma_{\bf k}(X)))\right.
    \\
    \left. f(\kappa(\diag(k_1,\dots,k_t)\sigma_{\bf k}(X))))
    \right|
    d(k_1,\dots,k_t)\, dX.
\end{multline}
We first claim that the expression in the absolute value is now
bounded, independently of $k_1,\dots,k_t$ and $X$. Indeed, $f$
being smooth, obtains only finitely many values on $K$ and
therefore it is enough to bound
$$
    (l_{M_{1,H}} \otimes \cdots \otimes l_{M_{t,H}})((\otimes_{j=1}^t \delta_j^{\otimes
    2m_i})(p(\diag(k_1,\dots,k_t)\sigma_{\bf k}(X))) v
$$
for any $v$ in the space of $\otimes_{j=1}^t \delta_j^{\otimes
2m_j}$. We may further assume that $v$ decomposes as $v=v_{1,1}
\otimes v_{1,2} \otimes \cdots \otimes v_{t,1} \otimes v_{t,2}$
where $v_{i,j}$ lies in the space of $ \delta_i^{\otimes m_i}$ for
$j=1,2$. For $p \in M$ we denote by $p_i^j \in GL_{r_i}(F)$ its
diagonal blocks as before. The map
$$
    p \mapsto
    (l_{M_{1,H}} \otimes \cdots \otimes l_{M_{t,H}})((\mathop{\otimes}\limits_{j=1}^t \delta_j^{\otimes
    2m_i})(p)v)
$$
is a matrix coefficient of the unitary representation
$\mathop{\otimes}\limits_{j=1}^t \delta_j^{\otimes m_j}$ evaluated
at
$$
    \diag((\tilde p_{2m_1}^1)^{-1}p_1^1,\dots,(\tilde p_{m_1+1}^1)^{-1}p_{m_1}^1,
    (\tilde p_{2m_2}^2)^{-1}p_1^2,\dots,(\tilde p_{m_t+1}^t)^{-1}p_{m_t}^t).
$$
Here $\tilde g=w_q {}^t g^{-1} w_q$ for $g \in GL_q(F)$. Matrix
coefficients of unitary representations are bounded. It is
therefore enough to show that for $\re s $ large enough the
expression
\begin{multline}\label{converge}
    \int
        \left(
            \prod_{i=1}^{t-1} \lambda(X_i)
        \right)^m
    \int_{K_{1,H} \times
    \cdots \times K_{t,H}}
    \delta_Q^s(p(\sigma_{\bf k}(X)))\\
    \prod_{i,j}\|
    \epsilon_{R_{i,j}}\diag(k_1,\dots,k_t)\sigma_{\bf
    k}(X)\|^{\lambda_i^j} \times
    d(k_1,\dots,k_t) dX
\end{multline}
converges. In order to bound the integrand in \eqref{converge}, we
will use the following two claims.

{\bf Claim 1:} There exist an $N$ such that
$$
    1 \le \|\epsilon_{R_{i,j}}\diag(k_1,\dots,k_t)\sigma_{\bf k}(X)\|
    \le         \left(
            \prod\limits_{i=1}^{t-1} \lambda(X_i)
        \right)^N
$$
and

{\bf Claim 2:}
$$
    \delta_Q^s(p(\sigma_{\bf k}(X))) \le \left(
            \prod\limits_{i=1}^{t-1} \lambda(X_i)
        \right)^{-s}.
$$
The upper bound in {\bf Claim 1} is obvious. We show the lower
bound. To avoid ambiguity of notation let us denote by $k_{i,H}$
the elements of $K_{i,H}$. Note that the lower $R_{1,j}$ rows of
$\diag(k_{1,H},\dots,k_{t,H})\sigma_{\bf k}(X)$ have the form
$$
    \left(
    \begin{array}{ccc}
     * & k_{j,H} & 0 \\
      {}* & * & \diag(k_{j+1,H},\dots,k_{t,H})\sigma_{(2k_{j+1},\dots,2k_t)}(X_{j+1},\dots,X_{t-1})
    \end{array}
    \right)
$$
where we put $*$ in each block that will play no role for us. For
each $i \in [1,2m_j]$ there is a $ir_j \times ir_j$ minor $A$ in
the lower $ir_j$ rows of $k_{j,H}$ of absolute value $1$. Together
with the lower right $2(k_{j+1}+\dots k_t) \times 2(k_{j+1}+\dots
k_t)$-block of $\diag(k_{1,H},\dots,k_{t,H})\sigma_{\bf k}(X)$ we
get that
$$
    \left(
        \begin{array}{cc}
            A & 0 \\
            {}* & \diag(k_{j+1,H},\dots,k_{t,H})\sigma_{(2k_{j+1},\dots,2k_t)}(X_{j+1},\dots,X_{t-1})
        \end{array}
    \right)
$$
is a $R_{i,j} \times R_{i,j} $ minor of absolute value $1$ in the
lower $R_{i,j}$ rows of the matrix
$\diag(k_{1,H},\dots,k_{t,H})\sigma_{\bf k}(X)$. This shows {\bf
Claim 1}. To show {\bf Claim 2}, we note that if $|\det g|=1$ then
$$
    \delta_Q(p(g))=\prod_{j=1}^{t-1} \|\epsilon_{R_{1,j}}
    g\|^{-2k_{t+1-j}-2k_{t-j}}.
$$
It can be proved as in \cite[Lemma 5.5]{OII}, that
$$
    \|\epsilon_{R_{1,j}}p(\sigma_{\bf k}(X))\| \ge \lambda(X_{t-j}).
$$
{\bf Claim 2} readily follows. Using the two claims, we bound the
integral (\ref{converge}) by replacing each term $\|
    \epsilon_{R_{i,j}}\diag(k_1,\dots,k_t)\sigma_{\bf
    k}(X)\|^{\lambda_i^j}$ by $1$ if $\lambda_i^j \le 0$ and by a certain fixed and large enough power of
$\left(
       \prod\limits_{i=1}^{t-1} \lambda(X_i)
\right) $ otherwise. It is shown in \cite{MR93b:22035} that for
any $q$ the integral
$$
    \int_{F^q} \lambda(Y)^{-s} dY
$$
is convergent for $s
>>0$. The lemma therefore follows from the two claims.
\end{proof}
This concludes the proof of Proposition \ref{induction} and in
particular also of Theorem \ref{symp}.

\end{document}